\DeclareMathOperator{\Con}{Con}
\DeclareMathOperator{\DM}{DM}
\DeclareMathOperator{\BDM}{\mathbf{DM}}
\newtheorem{theorem}{Theorem}[section]
\newtheorem{definition}[theorem]{Definition}
\newtheorem{lemma}[theorem]{Lemma}
\newtheorem{corollary}[theorem]{Corollary}
\title{Logical and algebraic properties of generalized orthomodular posets}
\author{Ivan~Chajda and Helmut~L\"anger}
\date{}
\begin{document}

\footnotetext{Support of the research of the authors by the Austrian Science Fund (FWF), project I~4579-N, and the Czech Science Foundation (GA\v CR), project 20-09869L, entitled ``The many facets of orthomodularity'', as well as by \"OAD, project CZ~02/2019, entitled ``Function algebras and ordered structures related to logic and data fusion'', and, concerning the first author, by IGA, project P\v rF~2020~014, is gratefully acknowledged.}

\maketitle

\begin{abstract}
Generalized orthomodular posets were introduced recently by D.~Fazio, A.~Ledda and the first author of the present paper in order to establish a useful tool for studying the logic of quantum mechanics. They investigated structural properties of these posets. In the present paper we study logical and algebraic properties of these posets. In particular, we investigate conditions under which they can be converted into operator residuated structures. Further, we study their representation by means of algebras (directoids) with everywhere defined operations. We prove congruence properties for the class of algebras assigned to generalized orthomodular posets and, in particular, for a subvariety of this class determined by a simple identity. Finally, in contrast to the fact that the Dedekind-MacNeille completion of an orthomodular poset need not be an orthomodular lattice we show that the Dedekind-MacNeille completion of a stronger version of a generalized orthomodular poset is nearly an orthomodular lattice. 

\end{abstract}

{\bf AMS Subject Classification:} 03G12, 06A11, 03B47, 03B62, 06B23

{\bf Keywords:} generalized orthomodular poset, orthomodular poset, orthomodular lattice, strong generalized orthomodular poset, assigned directoid, conditional operator residuation, operator residuation, congruence distributivity, congruence permutability, congruence regularity, Dedekind-MacNeille completion

\section{Introduction}

Although the logic of quantum mechanics was axiomatized by K.~Husimi (\cite H) and G.~Birkhoff and J.~von~Neumann (\cite{Bv}) by means of orthomodular lattices, it was early shown that this description need not be appropriate in all concerns. Orthomodular lattices characterize the lattice of projection operators on a Hilbert space. In 1963, many years after orthomodular lattices have been introduced, it was realized that a more appropriate formalization of the logic of quantum mechanics could be obtained by replacing the axioms of orthomodular lattices by the weaker axioms of orthomodular posets, see e.g.\ \cite M. The reason for this weakening was that in the logic of quantum mechanics the disjunction of two propositions may exist only in the case that they are comparable with each other or orthogonal to each other. On the other hand, also this assumption may be too restrictive. Moreover, orthomodular posets cannot be extended to orthomodular lattices by means of the Dedekind-MacNeille completion. Therefore the concept of {\em generalized orthomodular posets} was introduced in \cite{CFL}. In that paper the order-theoretical properties of generalized orthomodular posets were investigated.

Since all so-called quantum structures, such as orthomodular lattices, orthomodular posets, generalized orthomodular posets (see also \cite{CL18a} and \cite{CL18b}), are assumed to be an algebraic axiomatization of the semantics of the logic of quantum mechanics, the natural question arises how the logical connective of implication should be modeled within these logics. Implication turns out to be one of the most fundamental and most productive logical connectives which enables logical deduction and therefore it should be introduced in a way acceptable in logics. Usually, implication is considered to be sound if it is related with conjunction via the so-called {\em adjointness}, i.e.\ implication and conjunction should form a {\em residuated pair}. For orthomodular lattices this task was solved by the authors in \cite{CL17a} and \cite{CL17b}, for orthomodular posets in \cite{CL14} and for some more general posets in \cite{CL18b}.

The aim of the present paper is to describe some algebraic and logical properties of generalized orthomodular posets, to show how these posets can be represented by means of algebras with everywhere defined operations and to introduce certain modifications of the connective of implication related to certain types of conjunction via a generalized version of adjointness.

A certain generalization of orthomodular posets avoiding existential quantifiers was introduced in \cite{CFL} under the name {\em pseudo-orthomodular poset} where it was shown that though such a poset cannot be organized into a residuated structure, it is possible to define binary operators $M(x,y)$ and $R(x,y)$ on such a poset satisfying the so-called operator left adjointness
\begin{enumerate}
\item[(1)] $M(x,y)\subseteq L(z)$ if and only if $L(x)\subseteq R(y,z)$.
\end{enumerate}
This motivated us to find something analogous for generalized orthomodular posets. Since the definition of a generalized orthomodular poset is a bit more simple than that of a pseudo-orthomodular one, we will need only one operator, namely $R(x,y)$. On the other hand, analogously as in \cite{CL14}, we need an additional condition guaranteeing property (1). Since our operator $M(x,y)$ will be commutative, instead of operator left adjointness we will have conditional operator adjointness. Further, we will show that if a strong generalized orthomodular poset is considered then the conjunction can be defined in a slightly different way as well as the residuated operator $R(x,y)$ such that we really obtain a left residuated structure.

\section{Basic concepts}

We start by defining the aforementioned concepts.

Let $(P,\leq)$ be a poset, $a,b\in P$ and $A,B\subseteq P$. Then $A\leq B$ should mean $x\leq y$ for all $x\in A$ and $y\in B$. Instead of $\{a\}\leq B$, $A\leq\{b\}$ and $\{a\}\leq\{b\}$ we simply write $a\leq B$, $A\leq b$ and $a\leq b$, respectively. Moreover, we define
\begin{align*}
L(A) & :=\{x\in P\mid x\leq A\}, \\
U(A) & :=\{x\in P\mid A\leq x\}
\end{align*}
and call these sets the {\em lower} and {\em upper cone} of $A$, respectively. Instead of $L(\{a,b\})$, $L(\{a\}\cup B)$, $L(A\cup B)$ and $L(U(A))$ we simply write $L(a,b)$, $L(a,B)$, $L(A,B)$ and $LU(A)$, respectively. Analogously we proceed in similar cases. 

Let $(P,\leq,{}')$ be a poset with a unary operation $'$ and $A\subseteq P$. We define $A':=\{x'\mid x\in A\}$. We call $'$ an {\em antitone involution} of $(P,\leq)$ if both $x''\approx x$ and if for $x,y\in P$, $x\leq y$ implies $y'\leq x'$. Let $(P,\leq,{}',0,1)$ be a bounded poset with a unary operation $'$. We call $'$ a {\em complementation} on $(P,\leq,0,1)$ if $L(x,x')\approx\{0\}$ and $U(x,x')\approx\{1\}$.

An {\em orthoposet} is a bounded poset $(P,\leq,{}',0,1)$ with an antitone involution $'$ which is a complementation.

Any orthoposet $(P,\leq,{}',0,1)$ satisfies the {\em De Morgan's laws}
\begin{align*}
(L(x,y))' & \approx U(x',y'), \\
(U(x,y))' & \approx L(x',y').
\end{align*}
The following concept was introduced in \cite{CFL}.

\begin{definition}
A {\em generalized orthomodular poset} is an orthoposet $(P,\leq,{}',0,1)$ satisfying the condition
\begin{enumerate}
\item[{\rm(2)}] $x\leq y$ implies $U(y)=U(x,L(x',y))$.
\end{enumerate}
\end{definition}

Using De Morgan's laws it is elementary to prove that {\rm(2)} is equivalent to
\begin{enumerate}
\item[(3)] $x\leq y$ implies $L(x)=L(y,U(x,y'))$.
\end{enumerate}
Since $y$ is the smallest element of $U(y)$, $U(y)=U(x,L(x',y))$ means that $y$ is the smallest element of $U(x,L(x',y))$, i.e.\ $y$ is the smallest upper bound, i.e.\ the supremum, of $\{x\}\cup L(x',y)$ which means $y=x\vee L(x',y)$. Hence {\rm(2)} can be written in the form
\[
x\leq y\text{ implies }y=x\vee L(x',y)).
\]
Analogously, {\rm(3)} can be written in the form
\[
x\leq y\text{ implies }x=y\wedge U(x,y').
\]
Now we define also a stronger version of a generalized orthomodular poset as follows.

\begin{definition}
A {\em strong generalized orthomodular poset} is an orthoposet $(P,\leq,{}',0,1)$ satisfying the condition that for all $x\in P$ and for all subsets $B$ of $P$
\begin{enumerate}
\item[{\rm(4)}] $x\leq U(B)\text{ implies }U(B)=U(x,L(x',U(B)))$.
\end{enumerate}
\end{definition}

Using De Morgan's laws it is elementary to prove that this is equivalent to the condition that for all $y\in P$ and $A\subseteq P$
\begin{enumerate}
\item[(5)] $L(A)\leq y\text{ implies }L(A)=L(y,U(L(A),y'))$.
\end{enumerate}
Obviously, every strong generalized orthomodular poset is a generalized orthomodular poset.

Now we recall the useful concept of a directoid which serves as an algebraization of a given poset. In Section~4 we will show how a generalized orthomodular poset can be converted into an algebra with everywhere defined operations by using of an assigned directoid.

A {\em {\rm(}join-{\rm)}directoid} (see \cite{CL11} and \cite{JQ}) is a groupoid $(D,\sqcup)$ satisfying the following identities:
\begin{enumerate}[(i)]
\item $x\sqcup x\approx x$ (idempotency),
\item $x\sqcup y\approx y\sqcup x$ (commutativity),
\item $x\sqcup((x\sqcup y)\sqcup z)\approx(x\sqcup y)\sqcup z$ (weak associativity).
\end{enumerate}
Let $\mathbf P=(P,\leq)$ be a poset. A groupoid $(P,\sqcup)$ is called a {\em directoid assigned to $\mathbf P$} if it satisfies the following conditions for all $x,y\in P$:
\begin{itemize}
\item $x\sqcup y=y$ if $x\leq y$,
\item $x\sqcup y=y\sqcup x\in U(x,y)$.
\end{itemize}
Assume $\mathbb D(\mathbf P)$ to be a directoid assigned to $\mathbf P$. Then $\mathbb D(\mathbf P)$ is a directoid by the above definition. Conversely, let $\mathbf D=(D,\sqcup)$ be a directoid and let $\mathbb P(\mathbf D)$ the ordered pair $(D,\leq)$ where $\leq$ denotes the binary relation on $D$ defined by
\[
x\leq y\text{ if }x\sqcup y=y,
\]
the so-called {\em induced order}. Then $\mathbb P(\mathbf D)$ is a poset and $\mathbb P(\mathbb D(\mathbf P))=\mathbf P$. This shows that, though $\mathbb D(\mathbf P)$ is in general not uniquely determined by $\mathbf P$, it contains the whole information on $\mathbf P$. If $\mathbf P=(P,\leq,0,1)$ is a bounded poset then there exists an assigned directoid since $1\in U(x,y)$ and hence $U(x,y)\neq\emptyset$ for all $x,y\in P$. One can easily check that an assigned directoid $\mathbb D(\mathbf P)=(P,\sqcup,0,1)$ satisfies the identities
\[
x\sqcup0\approx x\text{ and }x\sqcup1\approx1.
\]
Such a directoid will be referred to as a {\em bounded directoid}.

\section{Residuation}

Now we introduce one of our main concepts.

\begin{definition}\label{def1}
A {\em conditionally operator residuated poset} is an ordered six-tuple $\mathbf R=(P,\leq,{}',R,0,1)$ such that $(P,\leq,{}',0,1)$ is a bounded poset with a unary antitone operation $'$ and $R$ is a mapping from $P^2$ to $2^P$ satisfying the following conditions for all $x,y,z\in P$:
\begin{enumerate}[{\rm(i)}]
\item If $x'\leq y$ then $L(x,y)\subseteq L(z)$ implies $L(x)\subseteq R(y,z)$,
\item if $z\leq y$ then $L(x)\subseteq R(y,z)$ implies $L(x,y)\subseteq L(z)$,
\item $R(x,0)\approx L(x')$,
\item $R(x'',x)\approx P$.
\end{enumerate}
$\mathbf R$ is said to satisfy {\em operator divisibility} if
\[
x\leq y\text{ implies }L(y,U(R(y,x)))=L(x).
\]
\end{definition}

In the sequel we will show that there are close connections between generalized orthomodular posets and conditionally operator residuated posets.

\begin{theorem}
Let $(P,\leq,{}',0,1)$ be a generalized orthomodular poset and put
\[
R(x,y):=LU(x',y)\text{ for all }x,y\in P.
\]
Then $\mathbf R:=(P,\leq,{}',R,0,1)$ is a conditionally operator residuated poset satisfying operator divisibility.
\end{theorem}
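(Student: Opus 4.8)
The plan is to verify the four defining conditions (i)--(iv) of Definition~\ref{def1} together with operator divisibility, relying throughout on three elementary facts about cones in a poset: for a single element $a$ one has $LU(a)=L(a)$ and $UL(a)=U(a)$ (since $a$ is the greatest element of $L(a)$ and the least of $U(a)$); the maps $L$ and $U$ are inclusion-reversing while $LU$ and $UL$ are inclusion-preserving; and the Galois identities $LUL=L$ and $ULU=U$ hold. In addition I would record the characterizations (2) and (3) of generalized orthomodularity and the complementation law $U(x,x')=\{1\}$ of an orthoposet, since these supply the only genuinely nontrivial input. Note also that $'$ being an antitone involution makes $x'\le y$ equivalent to $y'\le x$.

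Conditions (iii) and (iv) are immediate. For (iii), $R(x,0)=LU(x',0)=LU(x')=L(x')$, using that $0$ is the least element so $U(x',0)=U(x')$, followed by the singleton identity. For (iv), since $x'''=x'$, we get $R(x'',x)=LU(x',x)=L(U(x,x'))=L(1)=P$ by the complementation law.

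For (i), assume $x'\le y$, hence $y'\le x$, and $L(x,y)\subseteq L(z)$. The key move is to apply generalized orthomodularity in the form (2) to $y'\le x$, which yields $U(x)=U(y',L(x,y))$; taking lower cones and using $LU(x)=L(x)$ gives $L(x)=LU(y',L(x,y))$. From $L(x,y)\subseteq L(z)$ one checks $U(y',z)\subseteq U(y',L(x,y))$ (any common upper bound of $y'$ and $z$ dominates every element of $L(x,y)\subseteq L(z)$), and applying $L$ reverses this inclusion to $LU(y',L(x,y))\subseteq LU(y',z)=R(y,z)$, whence $L(x)\subseteq R(y,z)$. For (ii), assume $z\le y$ and $L(x)\subseteq R(y,z)=LU(y',z)$, and pick $w\in L(x,y)$. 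Since $w\le x$ we have $w\in L(x)\subseteq LU(y',z)$, i.e.\ $w\le U(z,y')$, and $w\le y$ by assumption. Invoking (3) for $z\le y$, namely $L(z)=L(y,U(z,y'))$, we conclude $w\in L(z)$, so $L(x,y)\subseteq L(z)$.

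Finally, for operator divisibility assume $x\le y$. Using $ULU=U$ one has $U(R(y,x))=ULU(y',x)=U(y',x)$, so $L(y,U(R(y,x)))=L(y,U(y',x))$; and (3) applied to $x\le y$ gives exactly $L(x)=L(y,U(x,y'))=L(y,U(y',x))$, yielding $L(y,U(R(y,x)))=L(x)$. The main thing to watch is the bookkeeping of inclusion reversals in (i) and (ii) and selecting the correct one of the equivalent forms (2)/(3) at each step; once the right form is chosen the computations are routine. I do not anticipate a deeper obstacle, since generalized orthomodularity was essentially designed to make these adjointness-type identities hold.
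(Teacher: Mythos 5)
Your proof is correct and follows essentially the same route as the paper's: conditions (iii) and (iv) by direct computation with the cone identities, condition (i) from orthomodularity in form (2) applied to $y'\le x$, condition (ii) and operator divisibility from form (3), differing only in cosmetic details (element-picking versus set inclusions). No gaps.
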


\begin{proof}
Let $a,b,c\in P$.
\begin{enumerate}[(i)]
\item If $a'\leq b$ and $L(a,b)\subseteq L(c)$ then $b'\leq a$ and using (2), we compute
\[
L(a)=LU(a)=LU(b',L(a,b))\subseteq LU(b',L(c))=LU(b',c)=R(b,c).
\]
\item If $c\leq b$ and $L(a)\subseteq R(b,c)$ then, using (3), we derive
\[
L(a,b)=L(a)\cap L(b)\subseteq LU(b',c)\cap L(b)=L(U(b',c),b)=L(c).
\]
\item Further, we have $R(x,0)\approx LU(x',0)\approx LU(x')\approx L(x')$.
\item We have $R(x'',x)\approx LU(x''',x)\approx LU(x',x)\approx L(1)=P$.
\end{enumerate}
In case $a\leq b$ we finally have
\[
L(b,U(R(b,a)))=L(b,ULU(b',a))=L(b,U(b',a))=L(a).
\]
Thus $\mathbf R$ satisfies operator divisibility.
\end{proof}

Hence we have shown that every generalized orthomodular poset can be organized into a conditionally operator residuated poset in analogy to the fact that every orthomodular poset can be converted into a conditionally residuated poset (cf.\ \cite{CL14}). Let us note that a generalized orthomodular poset can be reduced to an orthomodular poset if it is orthogonal (see \cite C for this concept and its properties).

Now we are interested in the converse question, i.e.\ whether a conditionally operator residuated poset is in fact a generalized orthomodular poset. In the next theorem we show that this is the case if the unary operation is antitone and satisfies operator divisibility.

\begin{theorem}
Let $(P,\leq,{}',R,0,1)$ be a conditionally operator residuated poset satisfying operator divisibility and assume
\[
R(x,y)=LU(x',y)\text{ for all }x,y\in P.
\] 
Then $\mathbf P:=(P,\leq,{}',0,1)$ is a generalized orthomodular poset.
\end{theorem}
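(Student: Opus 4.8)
The plan is to verify the two ingredients of the definition separately: that $(P,\leq,{}',0,1)$ is an \emph{orthoposet}, and that it satisfies condition~(2). Since the unary operation is assumed only antitone, the genuine work lies in upgrading $'$ to an antitone involution that is also a complementation; once that is done, condition~(2) is essentially free. Indeed, substituting $R(y,x)=LU(y',x)$ into operator divisibility and using the standard Galois identity $ULU=U$ (so that $U(R(y,x))=ULU(y',x)=U(y',x)$), the operator divisibility law $x\leq y\Rightarrow L(y,U(R(y,x)))=L(x)$ collapses to $x\leq y\Rightarrow L(y,U(x,y'))=L(x)$, which is precisely condition~(3). As soon as $(P,\leq,{}',0,1)$ is known to be an orthoposet, the De~Morgan argument recalled in the excerpt yields the equivalence of (3) with (2), finishing the proof. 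Thus I would spend the entire argument establishing the orthoposet structure.

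First I would secure the \emph{lower complementation}. Feeding the instance $0\leq y$ into operator divisibility and computing $L(y,U(y',0))=L(y)\cap LU(y')=L(y)\cap L(y')=L(y,y')$, while the right-hand side is $L(0)=\{0\}$, gives $L(y,y')=\{0\}$ for all $y$. Next I would use axiom~(i) with the choice $y:=x'$ and $z:=0$: the premises $x'\leq x'$ and $L(x,x')\subseteq L(0)=\{0\}$ both hold (the latter by the lower complementation just proved), so the conclusion reads $L(x)\subseteq R(x',0)=LU(x'')=L(x'')$, i.e.\ $x\leq x''$ for all $x$. Applying $'$ to $x\leq x''$ gives $x'''\leq x'$, and substituting $x\mapsto x'$ in $x\leq x''$ gives $x'\leq x'''$; hence $x'''=x'$.

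It then remains to obtain the \emph{upper complementation} and the \emph{involution}. Axiom~(iv) states $R(x'',x)=LU(x''',x)=P$; since a subset of $P$ whose lower cone is all of $P$ can only be $\{1\}$, this forces $U(x''',x)=\{1\}$, and combined with $x'''=x'$ this is exactly $U(x,x')=\{1\}$. Finally, to convert $x\leq x''$ into equality I would apply operator divisibility to the comparable pair $x\leq x''$: since $(x'')'=x'''=x'$ and $U(x',x)=\{1\}$, it yields $L(x)=L\bigl(x'',U(x',x)\bigr)=L(x'')$, whence $x''=x$. At this point $'$ is an antitone involution and a complementation, so $(P,\leq,{}',0,1)$ is an orthoposet, and by the reduction in the first paragraph condition~(2) holds; therefore $\mathbf P$ is a generalized orthomodular poset. (Note that only axioms~(i), (iv) and operator divisibility are actually used, since (iii) holds automatically under $R(x,y)=LU(x',y)$.)

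The step I expect to be the main obstacle is the involution $x''=x$: for a merely antitone operation neither $x\leq x''$ nor $x''\leq x$ is automatic, and the argument must thread axiom~(i) to produce one inequality and then combine operator divisibility with \emph{both} complementations to produce the other. Everything else reduces to routine manipulation with the cone identities $LU(x')=L(x')$ and $ULU=U$.
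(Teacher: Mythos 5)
Your proof is correct, but it reaches the orthoposet structure by a genuinely different route than the paper does. The paper spreads the work over all four residuation axioms: it derives $L(a,a')=\{0\}$ from (ii) together with (iii), gets $a\leq a''$ from (i) together with (iii), gets $a''\leq a$ from (ii) together with (iv) (the already-proved $a\leq a''$ licenses the application of (ii)), and only then obtains $U(a,a')=\{1\}$ via De Morgan's laws; operator divisibility enters only at the very end, to prove the orthomodular condition (2). You, by contrast, never invoke axiom (ii) at all: you extract the lower complementation from operator divisibility specialized to $0\leq y$, read the upper complementation directly off axiom (iv) (from $LU(x''',x)=P$ forcing $U(x''',x)=\{1\}$, combined with $x'''=x'$ which antitonicity gives once $x\leq x''$ is known), and close the involution by applying divisibility to the comparable pair $x\leq x''$. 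The concluding derivation of (2) (equivalently (3)) from divisibility, $ULU=U$ and De Morgan is essentially the same computation in both proofs. What your route buys is a mild sharpening of the theorem: given the stipulated form $R(x,y)=LU(x',y)$, conditions (ii) and (iii) of the definition of a conditionally operator residuated poset are redundant hypotheses --- (iii) is automatic, as you observe, and (ii) is never needed --- so the conclusion already follows from (i), (iv), antitonicity and operator divisibility. What the paper's arrangement emphasizes instead is the symmetry with the preceding converse theorem: each residuation axiom is cashed in for one piece of the orthoposet structure, with divisibility reserved exclusively for orthomodularity. One small point of care in your write-up: the claim that a subset of $P$ whose lower cone is all of $P$ ``can only be $\{1\}$'' overlooks the empty set, whose lower cone is also $P$; this is harmless here because $1\in U(x''',x)$ always, but the justification should say so.
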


\begin{proof}
Let $a,b\in P$. From $L(a')\subseteq R(a,0)$ we obtain $L(a',a)\subseteq L(0)$, i.e.\ $L(a,a')=\{0\}$. From $L(a,a')\subseteq L(0)$ we obtain $a\in L(a)\subseteq R(a',0)=L(a'')$ by (iii) which yields $a\leq a''$. Conversely, we apply (iv) and from $L(a'')\subseteq R(a'',a)$ we obtain $a''\in L(a'',a'')\subseteq L(a)$ which yields $a''\leq a$. Together we have $a''=a$. Thus $'$ is an involution. Hence we can apply De Morgan's laws to $L(a,a')=\{0\}$ in order to obtain $U(a,a')=\{1\}$, i.e.\ $'$ is also a complementation on $(P,\leq)$ and hence $\mathbf P$ an orthoposet. If $a\leq b$ then using De Morgan's laws and operator divisibility we finally obtain
\begin{align*}
U(a,L(a',b)) & =(L(a',U(a,b')))'=(L(a',ULU(a,b')))'=(L(a',U(R(a',b'))))'= \\
             & =(L(b'))'=U(b).
\end{align*}
Altogether, $\mathbf P$ is a generalized orthomodular poset.
\end{proof}

It is a natural question whether we can obtain also a structure which is residuated in a broader sense but no additional conditions must be supposed.

In what follows, we show that strong generalized orthomodular posets can be converted into left residuated structures where both the operators $M(x,y)$ and $R(x,y)$ (i.e.\ conjunction and implication in a broad sense) are everywhere defined. For this, we modify Definition~\ref{def1} as follows.

\begin{definition}\label{def2}
An {\em operator residuated poset} is an ordered seven-tuple $\mathbf R=(P,\leq,{}',M,R,0,1)$ such that $(P,\leq,{}',0,1)$ is a bounded poset with a unary antitone operation $'$ and $M$ and $R$ are mappings from $P^2$ to $2^P$ satisfying the following conditions for all $x,y,z\in P$:
\begin{enumerate}[{\rm(i)}]
\item $M(x,y)\subseteq L(z)$ if and only if $L(x)\subseteq R(y,z)$ {\rm(}{\em operator adjointness}{\rm)},
\item $R(x,0)\approx L(x')$,
\item $R(x,x'')\approx R(x'',x)\approx P$.
\end{enumerate}
$\mathbf R$ is said to satisfy {\em operator divisibility} if
\[
x\leq y\text{ implies }L(y,U(R(y,x)))=L(x).
\]
\end{definition}

Now we can show that strong generalized orthomodular posets can be organized into left residuated structures analogously as it was done for modular lattices and strongly modular posets in \cite{CL19a} and \cite{CL19b}.

\begin{theorem}
Let $(P,\leq,{}',0,1)$ be a strong generalized orthomodular poset and put
\begin{align*}
M(x,y) & :=L(U(x,y'),y), \\
R(x,y) & :=LU(x',L(x,y))
\end{align*}
for all $x,y\in P$. Then $\mathbf R:=(P,\leq,{}',M,R,0,1)$ is an operator residuated poset satisfying operator divisibility.
\end{theorem}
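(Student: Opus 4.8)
The plan is to verify the three defining conditions (i)--(iii) of an operator residuated poset together with operator divisibility, treating (ii), (iii) and divisibility as short computations and reserving the real work for the operator adjointness (i). Throughout I would write $a,b,c$ for the elements, freely use that $'$ is an antitone involution (so the De Morgan laws and $x''=x$ hold) and that $L$ and $U$ form a Galois connection (so $ULU=U$ and both $L,U$ are antitone).

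For the easy items: condition (ii) reduces to $L(x,0)=\{0\}$ and $U(x',0)=U(x')$, whence $R(x,0)=LU(x')=L(x')$. For (iii), since $x''=x$ we have $R(x,x'')=R(x'',x)=LU(x',L(x))$, and because $x\in L(x)$ every element of $U(x',L(x))$ lies in $U(x,x')=\{1\}$, so the cone collapses to $L(1)=P$. Operator divisibility is almost as direct: for $x\leq y$ one has $L(y,x)=L(x)$ and $U(y',L(x))=U(y',x)$, so $R(y,x)=LU(y',x)$; then $U(R(y,x))=ULU(y',x)=U(y',x)$, and $L(y,U(y',x))=L(y,U(x,y'))=L(x)$ by condition (3). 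So these three items need nothing beyond the ordinary generalized orthomodular axioms.

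The heart of the proof is the operator adjointness (i), which is the cone-theoretic form of the Sasaki adjunction: $M(a,b)=L(U(a,b'),b)$ plays the role of the Sasaki projection and $R(b,c)=LU(b',L(b,c))$ the role of the Sasaki hook. The key step I would isolate is the identity $U(a,b')=U(b',M(a,b))$: since $b'\leq U(a,b')$, applying the strong condition (4) with $B=\{a,b'\}$ and $x=b'$ yields $U(a,b')=U(b',L(b,U(a,b')))$, and $L(b,U(a,b'))=L(b)\cap LU(a,b')=M(a,b)$. Dually, applying the strong condition (5) with $A=\{b,c\}$ and $y=b$ (noting $L(b,c)\leq b$) gives $L(b,c)=L(b)\cap LU(b',L(b,c))=L(b)\cap R(b,c)$. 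It is precisely here that the strong version is indispensable, since $U(a,b')$ and $L(b,c)$ are cones of \emph{pairs} that need not reduce to cones of single elements, so conditions (2)/(3) would not suffice; establishing and justifying these two identities is the main obstacle.

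With these two identities in hand both implications of (i) become inclusion chasing. For the forward direction, $M(a,b)\subseteq L(c)$ together with $M(a,b)\subseteq L(b)$ gives $M(a,b)\subseteq L(b,c)$, hence $U(b',L(b,c))\subseteq U(b',M(a,b))=U(a,b')$; applying $L$ and using $a\in LU(a,b')$ yields $a\in LU(b',L(b,c))=R(b,c)$, i.e.\ $L(a)\subseteq R(b,c)$. For the converse, $L(a)\subseteq R(b,c)$ means $a$ is a lower bound of $U(b',L(b,c))$, so every $z$ in that set satisfies $z\geq a$ and $z\geq b'$, giving $U(b',L(b,c))\subseteq U(a,b')$; then any $w\in M(a,b)\subseteq LU(a,b')$ lies in $LU(b',L(b,c))=R(b,c)$, and since also $w\leq b$ the dual identity forces $w\in L(b)\cap R(b,c)=L(b,c)\subseteq L(c)$, whence $M(a,b)\subseteq L(c)$. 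Once the two structural identities are available, the remainder is routine antitonicity of cones.
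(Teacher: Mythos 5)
Your proposal is correct and follows essentially the same route as the paper: your two ``key identities'' $U(a,b')=U(b',M(a,b))$ and $L(b,c)=L(b)\cap R(b,c)$ are exactly the paper's inline applications of condition (4) (with $x=b'$, $B=\{a,b'\}$) in the forward direction of adjointness and of condition (5) (with $y=b$, $A=\{b,c\}$) in the converse, and your treatment of (ii), (iii) and operator divisibility matches the paper's computations. The only difference is presentational — you isolate the two uses of the strong conditions as named lemmas before the inclusion chasing, whereas the paper embeds them in two displayed cone-chains.
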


\begin{proof}
Let $a,b,c\in P$.
\begin{enumerate}[(i)]
\item If $M(a,b)\subseteq L(c)$ then, using (4), we compute
\begin{align*}
L(a) & =LU(a)\subseteq LU(a,b')=LU(b',L(U(a,b'),b))=LU(b',L(b)\cap L(U(a,b'),b))\subseteq \\
     & \subseteq LU(b',L(b)\cap L(c))=LU(b',L(b,c))=R(b,c).
\end{align*}
If, conversely, $L(a)\subseteq R(b,c)$ then
\[
U(b',L(b,c))=ULU(b',L(b,c))=U(R(b,c))\subseteq UL(a)=U(a)
\]
and hence, using (5), we obtain
\begin{align*}
M(a,b) & =L(U(a,b'),b)=L(U(a)\cap U(b'),b)\subseteq L(U(b',L(b,c))\cap U(b'),b)= \\
       & =L(U(b',L(b,c)),b)=L(b,c)\subseteq L(c).
\end{align*}
It is easy to verify the remaining conditions from Definition~\ref{def2}.
\item We have $R(x,0)\approx LU(x',L(x,0))\approx LU(x')\approx L(x')$.
\item We have $R(x,x'')\approx R(x'',x)\approx R(x,x)\approx LU(x',L(x,x))\approx LU(x',x)\approx L(1)\approx P$.
\end{enumerate}
Finally, in case $a\leq b$ we have
\[
L(b,U(R(b,a)))=L(b,ULU(b',L(b,a)))=L(b,U(b',a))=L(a),
\]
thus  $\mathbf R$ satisfies operator divisibility.
\end{proof}

Also some kind of a converse of the previous result holds.

\begin{theorem}
Let $(P,\leq,{}',M,R,0,1)$ be an operator residuated poset satisfying operator divisibility and assume
\begin{align*}
M(x,y) & =L(U(x,y'),y), \\
R(x,y) & =LU(x',L(x,y))
\end{align*}
for all $x,y\in P$. Then $\mathbf P:=(P,\leq,{}',0,1)$ is a generalized orthomodular poset.
\end{theorem}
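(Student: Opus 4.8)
The plan is to verify, one after another, the three defining features of a generalized orthomodular poset: that $'$ is an involution, that $'$ is a complementation (so that $\mathbf P$ is an orthoposet), and finally condition (2). Throughout I would exploit that axioms (i)--(iii) of Definition~\ref{def2} must hold for the \emph{concrete} operators given by the stated formulas, reading the adjointness (i) as a two-way translation between containments $M(\cdot,\cdot)\subseteq L(\cdot)$ and $L(\cdot)\subseteq R(\cdot,\cdot)$. Two elementary computations I expect to reuse are the ``diagonal'' membership $c\in M(c,c)=L(U(c,c'),c)$, valid because every upper bound of $\{c,c'\}$ dominates $c$, and the identity $M(a',a)=L(U(a'),a)=L(a,a')$, which follows directly from the formula for $M$ and does not presuppose involutivity of $'$.

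For involutivity I would use condition (iii). Since $R(a'',a)=P$, adjointness (i) yields $M(x,a'')\subseteq L(a)$ for every $x$ (the side condition $L(x)\subseteq R(a'',a)=P$ being automatic); specializing $x=a''$ and invoking $a''\in M(a'',a'')$ gives $a''\leq a$. Symmetrically, $R(a,a'')=P$ yields $M(x,a)\subseteq L(a'')$ for all $x$, and $a\in M(a,a)$ then forces $a\leq a''$. Hence $a''=a$, so $'$ is an antitone involution and De Morgan's laws become available.

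To see that $'$ is a complementation I would first obtain $L(a,a')=\{0\}$: by (ii) we have $R(a,0)=L(a')$, so $L(a')\subseteq R(a,0)$ holds trivially and adjointness (i) with $z=0$ returns $M(a',a)\subseteq L(0)$; since $M(a',a)=L(a,a')$ and $0$ lies in every lower cone, this says exactly $L(a,a')=\{0\}$. Applying the De Morgan law $(L(a,a'))'=U(a',a'')=U(a,a')$ together with $\{0\}'=\{1\}$ then gives $U(a,a')=\{1\}$, so $\mathbf P$ is an orthoposet.

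The remaining and, to my mind, most delicate step is condition (2), which I would reach through its De Morgan equivalent (3) by means of operator divisibility. The crux is to simplify $R(b,a)$ when $a\leq b$: here $L(b,a)=L(a)$ and $U(L(a))=U(a)$, so $R(b,a)=LU(b',L(a))=LU(a,b')$ and consequently $U(R(b,a))=U(a,b')$. Operator divisibility $L(b,U(R(b,a)))=L(a)$ then reads $L(b,U(a,b'))=L(a)$, which is precisely (3); since $'$ is now an antitone involution, De Morgan's laws convert (3) into (2), and $\mathbf P$ is a generalized orthomodular poset. I expect this reduction of $R(b,a)$ to be the main obstacle, since it is the one place where the more involved definition $R(x,y)=LU(x',L(x,y))$ (rather than the simpler $LU(x',y)$ of the conditionally operator residuated setting) must be shown to collapse, under the hypothesis $a\leq b$, to the form needed for divisibility to deliver (3).
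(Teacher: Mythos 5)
Your proposal is correct and takes essentially the same approach as the paper: the diagonal membership $c\in M(c,c)$ together with adjointness and axioms (ii)--(iii) yields the involution and $L(a,a')=\{0\}$, De Morgan's laws give the complementation, and operator divisibility (after simplifying $R$ under the comparability hypothesis) yields the orthomodular condition. The only cosmetic difference is that you apply divisibility to the pair $a\leq b$ to obtain condition (3) and then dualize to (2), while the paper applies it to $b'\leq a'$ and computes (2) directly --- the two calculations are De Morgan mirror images of one another.
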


\begin{proof}
Let $a,b\in P$. We have
\begin{align*}
 M(x,x) & \approx L(U(x,x'),x)\approx L(x), \\
M(x',x) & \approx L(U(x',x'),x)\approx L(x,x').
\end{align*}
From $L(a')\subseteq R(a,0)$ we obtain $L(a,a')=M(a',a)\subseteq L(0)$, i.e.\ $L(a,a')=\{0\}$. From $L(a)\subseteq R(a,a'')$ we obtain $a\in L(a)=M(a,a)\subseteq L(a'')$ which implies $a\leq a''$. Conversely, From $L(a'')\subseteq R(a'',a)$ we obtain $a''\in L(a'')=M(a'',a'')\subseteq L(a)$ which implies $a''\leq a$. Together we have $a''=a$. Thus $'$ is an involution. Hence we can apply De Morgan's laws to $L(a,a')=\{0\}$ in order to obtain $U(a,a')=\{1\}$, i.e.\ $'$ is also a complementation on $(P,\leq)$ and hence $\mathbf P$ an orthoposet. If $a\leq b$ then using De Morgan's laws and operator divisibility we finally obtain
\begin{align*}
U(a,L(a',b)) & =(L(a',U(a,b')))'=(L(a',ULU(a,L(a',b'))))'=(L(a',U(R(a',b'))))'= \\
             & =(L(b'))'=U(b),
\end{align*}
i.e.\ $\mathbf P$ is a generalized orthomodular poset.
\end{proof}

\section{Directoids assigned to generalized orthomodular posets}

Let $\mathbf P=(P,\leq,{}',0,1)$ be a bounded poset with a unary operation $'$. An {\em algebra} $(P,\sqcup,{}',0,$ $1)$ of type $(2,1,0,0)$ is called {\em assigned to $\mathbf P$} if it satisfies the following conditions for all $x,y\in P$:
\begin{itemize}
\item $x\sqcup y=y$ if $x\leq y$,
\item $x\sqcup y=y\sqcup x\in U(x,y)$.
\end{itemize}
Let $\mathbf A=(A,\sqcup,{}',0,1)$ be an algebra of type $(2,1,0,0)$, abbreviate $(x'\sqcup y')'$ by $x\sqcap y$ and let $\mathbb G(\mathbf A)$ denote the ordered quintuple $(A,\leq,{}',0,1)$ where $\leq$ denotes the binary relation on $A$ defined by
\[
x\leq y\text{ if }x\sqcup y=y.
\]
If $\mathbf P=(P,\leq,{}',0,1)$ is a generalized orthomodular poset and $\mathbb A(\mathbf P)=(P,\sqcup,{}',0,1)$ an algebra assigned to $\mathbf P$ then $(P,\sqcup)$ is a directoid assigned to the poset $(P,\leq)$ and hence $\mathbb A(\mathbf P)$ will be called a {\em directoid assigned to $\mathbf P$}.

As promised in Section~2, we can characterize the class of all generalized orthomodular posets by means of assigned directoids. At first, we state the following Lemma.

\begin{lemma}\label{lem1}
Let $\mathbf P=(P,\leq,{}',0,1)$ be a generalized orthomodular poset, $a,b\in P$ and $\mathbb A(\mathbf P)=(P,\sqcup,{}',0,1)$ a directoid assigned to $\mathbf P$. Then
\begin{enumerate}[{\rm(i)}]
\item $(P,\sqcup)$ is a directoid,
\item $\mathbb G(\mathbb A(\mathbf P))=\mathbf P$,
\item $L(a,b)=\{(a\sqcap x)\sqcap(b\sqcap x)\mid x\in P\}=\{x\in P\mid(a\sqcap x)\sqcap(b\sqcap x)=x\}$,
\item $U(a,b)=\{(a\sqcup x)\sqcup(b\sqcup x)\mid x\in P\}=\{x\in P\mid(a\sqcup x)\sqcup(b\sqcup x)=x\}$.
\end{enumerate}
\end{lemma}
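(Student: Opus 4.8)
The plan is to prove the four assertions of Lemma~\ref{lem1} in the natural order, since each later item leans on the earlier ones and on the defining properties of an assigned directoid. The whole proof rests on two ingredients: the two defining conditions of an assigned directoid ($x\sqcup y=y$ when $x\leq y$, and $x\sqcup y=y\sqcup x\in U(x,y)$), and the structural identity (2)/(3) characterizing generalized orthomodular posets. Items (i) and (ii) are essentially bookkeeping that repackages the general directoid theory recalled in Section~2, while items (iii) and (iv) are the substantive content, where the generalized orthomodularity condition must do real work.

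**First I would** dispatch item (i): it suffices to verify idempotency, commutativity, and weak associativity for $\sqcup$. Idempotency follows from the first defining clause applied to $x\le x$; commutativity is immediate from the second clause. For weak associativity, $x\sqcup\bigl((x\sqcup y)\sqcup z\bigr)$, I observe that $(x\sqcup y)\sqcup z\in U(x\sqcup y,z)\subseteq U(x)$ since $x\le x\sqcup y$, so $x$ is below $(x\sqcup y)\sqcup z$, and the first defining clause collapses the outer $\sqcup$ to give $(x\sqcup y)\sqcup z$. This is exactly the argument already indicated in Section~2, so it is short. For item (ii), I would note that the induced order $x\le y\iff x\sqcup y=y$ coincides with the original order: if $x\le y$ then $x\sqcup y=y$ by the first clause, and conversely $x\sqcup y=y$ together with $x\sqcup y\in U(x,y)$ forces $x\le y$. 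Since $'$, $0$, $1$ are carried over unchanged, $\mathbb G(\mathbb A(\mathbf P))=\mathbf P$.

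**The heart of the proof is** items (iii) and (iv), which are De Morgan duals of one another, so I would prove (iv) in full and obtain (iii) by dualizing via $x\sqcap y=(x'\sqcup y')'$ together with the De Morgan laws for orthoposets recorded before the definition of a generalized orthomodular poset. For (iv) I must establish that $U(a,b)$ equals both the image set $\{(a\sqcup x)\sqcup(b\sqcup x)\mid x\in P\}$ and the fixpoint set $\{x\mid (a\sqcup x)\sqcup(b\sqcup x)=x\}$. The key observation is that $(a\sqcup x)\sqcup(b\sqcup x)$ always lies in $U(a,b)$: indeed it lies in $U(a\sqcup x,b\sqcup x)\subseteq U(a,b)$ because $a\le a\sqcup x$ and $b\le b\sqcup x$. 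This gives the inclusion of the image set into $U(a,b)$. Conversely, if $x\in U(a,b)$ then $a\le x$ and $b\le x$, so $a\sqcup x=x$ and $b\sqcup x=x$ by the first defining clause, whence $(a\sqcup x)\sqcup(b\sqcup x)=x\sqcup x=x$; this simultaneously shows $x$ lies in the image set (witnessed by itself) and that $x$ is a fixpoint. Finally, every fixpoint lies in $U(a,b)$ by the first observation, closing the circle of inclusions.

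**The step I expect to be the main obstacle** is pinning down exactly where generalized orthomodularity (as opposed to mere boundedness) is genuinely needed. My sketch above for (iv) seems to use only the assigned-directoid axioms, not condition (2); the analogous reasoning for (iii) will require the De Morgan laws, which hold already for any orthoposet. The likely resolution is that the cone-representation of items (iii) and (iv) is valid for any assigned directoid on a bounded poset with antitone involution, and the hypothesis of generalized orthomodularity enters only insofar as it guarantees that $\mathbb A(\mathbf P)$ deserves the name ``directoid assigned to $\mathbf P$'' in the sense set up at the start of Section~4 — i.e.\ it is the ambient context rather than a lever pulled inside the cone computation. I would therefore write the proof so that each invocation of an axiom is explicit, and flag in the argument for (iii) that the passage from $U$ to $L$ uses precisely the orthoposet De Morgan laws $\bigl(U(x,y)\bigr)'\approx L(x',y')$, so that the reader can see that no stronger property than the orthoposet structure is consumed in the combinatorial core.
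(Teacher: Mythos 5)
Your proposal is correct and takes essentially the same approach as the paper: items (i) and (ii) are the bookkeeping from Section~2, and the cone characterizations rest on the same two observations (each $(a\sqcup x)\sqcup(b\sqcup x)$ lies in $U(a,b)$, and each element of $U(a,b)$ is its own fixpoint witness), the only difference being that the paper proves (iii) directly and obtains (iv) by duality while you do the mirror image. Your diagnosis that generalized orthomodularity serves only as ambient context is also consistent with the paper, whose proof of (iii)/(iv) likewise consumes nothing beyond the assigned-directoid axioms and the antitone involution.
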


\begin{proof}
\
\begin{enumerate}
\item[(i)] and (ii) are already shown in Section~2. We prove the remaining assertions.
\item[(iii)] If $c\in P$ then
\begin{align*}
(a\sqcap c)\sqcap(b\sqcap c) & \leq a\sqcap c\leq a, \\
(a\sqcap c)\sqcap(b\sqcap c) & \leq b\sqcap c\leq b,
\end{align*}
i.e.\ $(a\sqcap c)\sqcap(b\sqcap c)\in L(a,b)$. If, conversely, $c\in L(a,b)$ then
\[
(a\sqcap c)\sqcap(b\sqcap c)=c\sqcap c=c.
\]
\item[(iv)] This follows from (iii) by duality.\end{enumerate}
\end{proof}

Condition (ii) of Lemma~\ref{lem1} shows that, though $\mathbb A(\mathbf P)$ is in general not uniquely determined by $\mathbf P$, it contains the whole information on $\mathbf P$, i.e.\ $\mathbf P$ can be reconstructed from $\mathbb A(\mathbf P)$.

Let $\mathbf P=(P,\leq,{}',0,1)$ be a generalized orthomodular poset and $\mathbb A(\mathbf P)=(P,\sqcup,{}',0,1)$ a directoid assigned to $\mathbf P$. Then we can easily check that the following conditions hold for all $x,y\in P$:
\begin{itemize}
\item $x\sqcap y=x$ if $x\leq y$,
\item $x\sqcap y=y\sqcap x\in L(x,y)$,
\item $(x\sqcup y)\sqcap x\approx x$,
\item $(x\sqcap y)\sqcup x\approx x$.
\end{itemize}
Moreover, $\sqcap$ satisfies identities (i) -- (iii) from Section~2 (with $\sqcup$ replaced by $\sqcap$) and hence $(P,\sqcap)$ is called a {\em {\rm(}meet-{\rm)}directoid}.

We now want to describe those directoids which are assigned to generalized orthomodular posets. We present here an easy characterization using three identities and one implication. The crucial thing is that we characterize posets by algebras with everywhere defined operations.

\begin{theorem}\label{th2}
Let $\mathbf P=(P,\leq,{}',0,1)$ be a bounded poset with a unary operation $'$ and $\mathbb A(\mathbf P)=(P,\sqcup,{}',0,1)$ an algebra assigned to $\mathbf P$. Then $\mathbf P$ is a generalized orthomodular poset if and only if $\mathbb A(\mathbf P)$ satisfies the following conditions:
\begin{enumerate}[{\rm(i)}]
\item if $(x\sqcup z)\sqcup(((x'\sqcap w)\sqcap((x\sqcup y)\sqcap w))\sqcup z)=z$ for all $w\in P$ then $(x\sqcup y)\sqcup z=z$,
\item $(x\sqcap y)\sqcup x\approx x$,
\item $(x\sqcup y)\sqcup(x'\sqcup y)\approx1$,
\item $x''\approx x$.
\end{enumerate}
\end{theorem}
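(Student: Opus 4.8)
The plan is to prove both implications at once by translating each of (i)--(iv) into an order-theoretic statement about $(P,\leq,{}')$, using throughout the biconditional $a\sqcup b=b\iff a\leq b$ (the ``if'' part is the assignment axiom, the converse follows since $a\le a\sqcup b=b$) together with idempotency $a\sqcup a=a$ (immediate from $a\le a$). Condition (iv) is literally that $'$ is an involution. For (ii), recall $x\sqcap y=(x'\sqcup y')'$, so $(x\sqcap y)\sqcup x\approx x$ says $x\sqcap y\leq x$, i.e.\ $(x'\sqcup y')'\leq x$; writing $a:=x'$, $b:=y'$ and using (iv) this reads $(a\sqcup b)'\leq a'$, and specializing to $a\leq b$ (so $a\sqcup b=b$) yields $b'\leq a'$. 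Hence (ii) together with (iv) is equivalent to $'$ being an antitone involution, after which De Morgan's laws $(U(a,b))'\approx L(a',b')$ and $(L(a,b))'\approx U(a',b')$ become available and $\sqcap$ becomes a meet-directoid assigned to $\leq$ with $x\sqcap y\in L(x,y)$.

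Next I would read off complementation from (iii). Given any $z\in U(x,x')$ we have $x\leq z$ and $x'\leq z$, so $x\sqcup z=z$ and $x'\sqcup z=z$; substituting $y:=z$ into (iii) gives $1=(x\sqcup z)\sqcup(x'\sqcup z)=z\sqcup z=z$. Since $1\in U(x,x')$ trivially, this proves $U(x,x')=\{1\}$, and applying De Morgan's law gives $L(x,x')=(U(x,x'))'=\{1'\}=\{0\}$. Thus $'$ is a complementation and $\mathbf P$ is an orthoposet. Conversely, when $\mathbf P$ is already a generalized orthomodular poset, (iii) holds because $(x\sqcup y)\sqcup(x'\sqcup y)\in U(x\sqcup y,x'\sqcup y)\subseteq U(x,x')=\{1\}$, while (ii) and (iv) are immediate from the antitone involution.

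The heart of the argument is the equivalence of (i) with the defining condition (2). Fix $x,y,z$ and abbreviate $s:=x\sqcup y$ and $B_w:=(x'\sqcap w)\sqcap((x\sqcup y)\sqcap w)$. Since each of $x\sqcup z$ and $B_w\sqcup z$ lies above $z$, an element of the form $(x\sqcup z)\sqcup(B_w\sqcup z)$ equals $z$ exactly when $x\leq z$ and $B_w\leq z$; moreover Lemma~\ref{lem1}(iii), whose proof uses only that $\sqcap$ is an assigned meet-directoid and is therefore available here, gives $\{B_w\mid w\in P\}=L(x',s)$. Hence the antecedent of (i) says precisely that $x\leq z$ and $L(x',s)\subseteq L(z)$, i.e.\ that $z\in U(x,L(x',s))$, while the consequent says $s\leq z$, i.e.\ $z\in U(s)$. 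As $y$ ranges over $P$ the element $s=x\sqcup y$ ranges over all of $U(x)$ (for $s\geq x$ one has $x\sqcup s=s$), so (i) is equivalent to the assertion $U(x,L(x',s))\subseteq U(s)$ for all $x\leq s$. The reverse inclusion $U(s)\subseteq U(x,L(x',s))$ holds trivially because $x\leq s$ and $L(x',s)\leq s$. Thus (i) is equivalent to $U(s)=U(x,L(x',s))$ for all $x\leq s$, which is exactly condition (2); this single equivalence delivers both directions for (i).

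The main obstacle is the bookkeeping in the third step: one must recognize that the nested term $B_w$ is exactly the family generating $L(x',x\sqcup y)$ via Lemma~\ref{lem1}(iii), that the outer double join collapses the antecedent to the two inequalities $x\leq z$ and $B_w\leq z$, and that $x\sqcup y$ sweeps out all of $U(x)$; only then does the opaque identity in (i) reveal itself as condition (2). A secondary point requiring care is the logical order in the converse direction: antitone involution must be extracted from (ii) and (iv) first, since it is what makes De Morgan's laws and the meet-directoid identities for $\sqcap$ available for the subsequent treatment of (iii) and (i).
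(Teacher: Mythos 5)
Your proof is correct and takes essentially the same route as the paper: both arguments hinge on Lemma~\ref{lem1}(iii) to identify the nested term $(x'\sqcap w)\sqcap((x\sqcup y)\sqcap w)$ with the elements of $L(x',x\sqcup y)$, extract antitonicity of $'$ from (ii) and (iv), read off complementation from (iii), and translate condition (i) into the defining condition (2). Your only departures are organizational rather than mathematical---you run both implications as a single chain of equivalences instead of two separate directions, and you make explicit (where the paper leaves it implicit) that Lemma~\ref{lem1}(iii) only needs the assigned meet-directoid properties, hence is already available once (ii) and (iv) yield the antitone involution.
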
 

\begin{proof}
First assume $\mathbf P$ to be a generalized orthomodular poset. According to Lemma~\ref{lem1},
\[
L(x',x\sqcup y)=\{(x'\sqcap w)\sqcap((x\sqcup y)\sqcap w)\mid w\in P\}.
\]
Moreover,
\[
U(x,L(x',x\sqcup y))=\{z\in P\mid(x\sqcup z)\sqcup(w\sqcup z)=z\text{ for all }w\in L(x',x\sqcup y)\}.
\]
Now (i) follows from $U(x,L(x',x\sqcup y))\subseteq U(x\sqcup y)$. Identity (ii) follows from $x\sqcap y\leq x$ and identity (iii) from $(x\sqcup y)\sqcup(x'\sqcup y)\in U(x,x')=\{1\}$. Identity (iv) is evident. Conversely, assume $\mathbb A(\mathbf P)$ to satisfy (i) -- (iv). If we substitute $x$ and $y$ in (ii) by $x'$ and $y'$, respectively, and apply (iv) then we obtain that $x\leq y$ implies
\[
y'\leq y'\sqcup x'=(x\sqcup y)'\sqcup x'=(x''\sqcup y'')'\sqcup x'=(x'\sqcap y')\sqcup x'=x',
\]
i.e.\ $'$ is antitone. Because of (iv), $'$ is an involution. Altogether, $'$ is an antitone involution on $(P,\leq)$. If $y\in U(x,x')$ then $x,x'\leq z$, thus $x\sqcup z=z=x'\sqcup z$. Using (v) we compute
\[
y=y\sqcup y=(x\sqcup y)\sqcup(x'\sqcup y)=1
\]
proving $U(x,x')=\{1\}$ for all $x\in P$. Due to De Morgan's laws we have $L(x,x')=\{0\}$ showing that $x'$ is a complement of $x$. Summarizing, $\mathbf P=(P,\leq,{}',0,1)$ is an orthoposet. Finally, assume $x\leq y$. Then, obviously, $U(y)\subseteq U(x,L(x'y))$. If, conversely, $z\in U(x,L(x',y))$ then, since by (iii) of Lemma~\ref{lem1} we have
\[
(x'\sqcap t)\sqcap((x\sqcup y)\sqcap t)\in L(x',y)
\]
for all $t\in P$, we have
\[
(x\sqcup z)\sqcup(((x'\sqcap t)\sqcap((x\sqcup y)\sqcap t))\sqcup z)=z\sqcup z=z
\]
for all $t\in P$ whence by (i)
\[
z=(x\sqcup y)\sqcup z\in U(x\sqcup y)=U(y).
\]
This shows $U(x,L(x',y))\subseteq U(y)$, thus $U(x,L(x',y))=U(y)$. Hence, $\mathbf P$ is a generalized orthomodular poset.
\end{proof}

Consider the following identity:
\begin{enumerate}
\item[(i')] $x\sqcup y\leq(x\sqcup z)\sqcup((x'\sqcap(x\sqcup y))\sqcup z)$.
\end{enumerate}
Let
\begin{itemize}
\item $\mathcal A$ denote the class of all algebras $(P,\sqcup,{}',0,1)$ of type $(2,1,0,0)$ satisfying identities (ii) -- (iv) of Theorem~\ref{th2} as well as condition (i) of Theorem~\ref{th2},
\item $\mathcal W$ denote the variety of all algebras $(P,\sqcup,{}',0,1)$ of type $(2,1,0,0)$ satisfying identities (ii) -- (iv) of Theorem~\ref{th2} as well as identity (i').
\end{itemize}

\begin{lemma}
The class $\mathcal W$ is a subvariety of the class $\mathcal A$.
\end{lemma}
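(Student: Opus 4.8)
The plan is to prove two things: that $\mathcal W$ is a variety, and that $\mathcal W\subseteq\mathcal A$. The first is immediate once one notes that all the defining conditions of $\mathcal W$ are equational. Identities (ii)--(iv) of Theorem~\ref{th2} are identities by inspection, and condition (i') also becomes an identity, because the induced order $a\leq b$ abbreviates the equation $a\sqcup b=b$ and $x\sqcap y$ abbreviates the term $(x'\sqcup y')'$; so (i') unfolds into an equation between two terms in $\sqcup$ and $'$. Thus $\mathcal W$ is equationally defined and hence a variety. Since every member of $\mathcal W$ already satisfies (ii)--(iv), which are shared with $\mathcal A$, the whole matter reduces to showing that, in the presence of (ii)--(iv), identity (i') implies the implication (i) of Theorem~\ref{th2}. (Note that (i) carries a universal quantifier over $w$ in its hypothesis, so it is not itself an identity; this is why $\mathcal A$ is only a class, and we claim $\mathcal W\subseteq\mathcal A$ rather than equality.)

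So let $\mathbf A\in\mathcal W$ and fix $x,y,z\in P$ for which the premise of (i) holds, i.e.
\[
(x\sqcup z)\sqcup\bigl(((x'\sqcap w)\sqcap((x\sqcup y)\sqcap w))\sqcup z\bigr)=z\quad\text{for all }w\in P.
\]
The central observation is that among the ``witnesses'' $(x'\sqcap w)\sqcap((x\sqcup y)\sqcap w)$ occurring here, one coincides with the element $x'\sqcap(x\sqcup y)$ that is singled out by (i'). Since the premise is universally quantified, I am free to use a single convenient instance, namely $w:=x'\sqcap(x\sqcup y)$. Writing $c:=x'\sqcap(x\sqcup y)$ and using that $c$ is a lower bound of both $x'$ and $x\sqcup y$, together with the meet-directoid laws $a\sqcap b=a$ whenever $a\leq b$ and $a\sqcap a=a$, the witness collapses:
\[
(x'\sqcap c)\sqcap((x\sqcup y)\sqcap c)=c\sqcap c=c=x'\sqcap(x\sqcup y).
\]
Hence the premise at this $w$ reduces to $(x\sqcup z)\sqcup\bigl((x'\sqcap(x\sqcup y))\sqcup z\bigr)=z$.

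It then remains to invoke (i') for the same triple $x,y,z$, which asserts $x\sqcup y\leq(x\sqcup z)\sqcup\bigl((x'\sqcap(x\sqcup y))\sqcup z\bigr)$. Combining this with the preceding equality gives $x\sqcup y\leq z$, that is $(x\sqcup y)\sqcup z=z$, which is exactly the conclusion of (i). Therefore $\mathbf A$ satisfies (i), so $\mathbf A\in\mathcal A$, and the inclusion $\mathcal W\subseteq\mathcal A$ follows; together with the variety property of $\mathcal W$ this shows that $\mathcal W$ is a subvariety of $\mathcal A$.

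The step I expect to be the main obstacle is the collapse of the witness term, i.e. verifying $(x'\sqcap c)\sqcap((x\sqcup y)\sqcap c)=x'\sqcap(x\sqcup y)$ for $c=x'\sqcap(x\sqcup y)$. The computation is short, but it rests on the basic absorption and idempotency properties of the associated meet-directoid $(P,\sqcap)$, and one must take care that these are genuinely available for the abstract members of $\mathcal W$ (exactly as they hold for the directoids assigned to posets in Lemma~\ref{lem1}) rather than silently borrowing the not-yet-established fact that $\mathbf A$ is assigned to a generalized orthomodular poset, which would be circular. Once these elementary laws are secured, matching the single instance $w=x'\sqcap(x\sqcup y)$ of the premise of (i) with the term distinguished by (i') closes the argument.
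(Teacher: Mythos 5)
Your proof is correct and takes essentially the same route as the paper: both arguments reduce condition (i) to identity (i') by instantiating the universally quantified premise of (i) at one well-chosen $w$ so that the witness term $(x'\sqcap w)\sqcap((x\sqcup y)\sqcap w)$ collapses to $x'\sqcap(x\sqcup y)$, after which (i') gives $x\sqcup y\leq z$. The only difference is the witness: the paper takes $w=1$ (collapsing via $a\sqcap 1=a$), while you take $w=x'\sqcap(x\sqcup y)$ (collapsing via idempotency and absorption of $\sqcap$); both collapses rest on the same implicit background directoid identities for members of $\mathcal W$, a reliance you at least flag explicitly while the paper does not.
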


\begin{proof}
Let $\mathbf A=(P,\sqcup,{}',0,1)$ be a member of the variety $\mathcal W$ and assume that
\[
(x\sqcup z)\sqcup(((x'\sqcap w)\sqcap((x\sqcup y)\sqcap w))\sqcup z)=z\text{ for all }w\in P.
\]
Putting $w=1$ we derive the identity
\[
(x\sqcup z)\sqcup((x'\sqcap(x\sqcup y))\sqcup z)\approx z.
\]
Hence $x\sqcup y\leq z$ follows by (i'), i.e. $(x\sqcup y)\sqcup z=z$. This shows that (i) holds and therefore $\mathbf A$ belongs to the class $\mathcal A$.
\end{proof}

We are now interested in the congruence properties of the class $\mathcal A$ and the variety $\mathcal W$ (see e.g.\ \cite{CEL}). For this, we recall the following concepts.

Let $\mathcal C$ be a class of algebras of the same type and $\mathcal V$ a variety. Then the class $\mathcal C$ is called
\begin{itemize}
\item {\em congruence permutable} if $\Theta\circ\Phi=\Phi\circ\Theta$ for all $\mathbf A\in\mathcal C$ and $\Theta,\Phi\in\Con\mathbf A$,
\item {\em congruence distributive} if $(\Theta\vee\Phi)\wedge\Psi=(\Theta\wedge\Psi)\wedge(\Phi\wedge\Psi)$ for all $\mathbf A\in\mathcal C$ and $\Theta,\Phi,\Psi\in\Con\mathbf A$,
\item {\em arithmetical} if it is both congruence permutable and congruence distributive,
\item {\em congruence regular} if for each $\mathbf A=(A,F)\in\mathcal C$, every $a\in A$ and all $\Theta,\Phi\in\Con\mathbf A$ with $[a]\Theta=[a]\Phi$ we have $\Theta=\Phi$.
\end{itemize}
The following is well-known (cf.\ \cite{CEL}, Theorems~3.1.8, Corollary~3.2.4 and Theorem~6.1.3):
\begin{itemize}
\item The class $\mathcal C$ is congruence permutable if there exists a so-called {\em Maltsev term}, i.e.\ a ternary term $p$ satisfying
\[
p(x,x,y)\approx p(y,x,x)\approx y,
\]
\item The class $\mathcal C$ is congruence distributive if there exists a so-called {\em majority term}, i.e.\ a ternary term $m$ satisfying
\[
m(x,x,y)\approx m(x,y,x)\approx m(y,x,x)\approx x,
\]
\item The variety $\mathcal V$ is congruence regular if and only if there exists a positive integer $n$ and ternary terms $t_1,\ldots,t_n$ such that
\[
t_1(x,y,z)=\cdots=t_n(x,y,z)=z\text{ if and only if }x=y.
\]
\end{itemize}

\begin{theorem}
The class $\mathcal A$ is congruence distributive and the variety $\mathcal W$ is arithmetical and congruence regular.
\end{theorem}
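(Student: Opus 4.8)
The plan is to establish each of the three congruence properties through the corresponding term condition recalled just above, so that the whole argument reduces to exhibiting suitable ternary terms in the language $\{\sqcup,{}',0,1\}$ (recall $x\sqcap y:=(x'\sqcup y')'$) and verifying the required identities. The tools I would rely on in every member of $\mathcal A$ are the directoid facts that $\sqcup$, and hence via De~Morgan together with (iv) also $\sqcap$, is idempotent and commutative, the absorption law (ii), and the complementation property $L(x,x')=\{0\}$, $U(x,x')=\{1\}$ coming from the fact that the induced poset is an orthoposet. For the variety $\mathcal W$ I would use in addition that, putting $z=0$ in (i') and using $x\le x\sqcup y$ together with $x\sqcup(x\sqcup y)=x\sqcup y$, one obtains the orthomodular identity
\[
x\sqcup(x'\sqcap(x\sqcup y))\approx x\sqcup y,
\]
whose meet form $x\sqcap(x'\sqcup(x\sqcap y))\approx x\sqcap y$ follows by applying $'$ and (iv). It is exactly this identity that is not available in $\mathcal A$ in general, which is why permutability and regularity are asserted only for $\mathcal W$.

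For congruence distributivity of $\mathcal A$ I would take the lattice-style majority term
\[
m(x,y,z):=\bigl((x\sqcap y)\sqcup(x\sqcap z)\bigr)\sqcup(y\sqcap z)
\]
and check $m(x,x,y)\approx m(x,y,x)\approx m(y,x,x)\approx x$. After collapsing the repeated variable by idempotency, each of these reduces to one or two applications of (ii) and commutativity, so no orthomodularity is needed; the three identities therefore hold throughout $\mathcal A$, and since $\mathcal W$ is a subvariety of $\mathcal A$ they hold in $\mathcal W$ as well.

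For congruence permutability of $\mathcal W$ I would use the Maltsev term
\[
p(x,y,z):=\bigl(x\sqcap(y'\sqcup(y\sqcap z))\bigr)\sqcup\bigl((z\sqcap(y'\sqcup(y\sqcap x)))\sqcup(x\sqcap z)\bigr),
\]
built from the Sasaki-type expression $y'\sqcup(y\sqcap w)$. Substituting $y=x$ turns the block $y'\sqcup(y\sqcap x)$ into $x'\sqcup x=1$, while the meet form of the orthomodular identity gives $x\sqcap(x'\sqcup(x\sqcap z))=x\sqcap z$; a short absorption computation then yields $p(x,x,y)\approx y$, and the symmetric substitution $y=z=x$ yields $p(y,x,x)\approx y$. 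Together with the majority term this proves that $\mathcal W$ is arithmetical. I expect this to be the main obstacle: one must recognise that the naive Boolean Maltsev term fails here, because recovering the odd variable would require a distributive law of the form $(x\sqcap y)\sqcup(x'\sqcap y)\approx y$ which does not hold, and that instead the repeated variable has to be cancelled through the orthomodular identity. Finding a bracketing of $p$ in which every reduction is either an absorption or an instance of that identity is the delicate point.

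Finally, for congruence regularity of $\mathcal W$ I would put $d_1:=x'\sqcap(x\sqcup y)$ and $d_2:=y'\sqcap(x\sqcup y)$ and take the four terms
\[
t_1:=d_1\sqcup z,\quad t_2:=d_1'\sqcap z,\quad t_3:=d_2\sqcup z,\quad t_4:=d_2'\sqcap z.
\]
If $x=y$ then $x\sqcup y=x$ and $d_1=d_2=0$, so all four terms equal $z$. Conversely, $t_1=z$ forces $d_1\le z$ and $t_2=z$ forces $z\le d_1'$, hence $d_1\le z'$ as well, so $d_1\in L(z,z')=\{0\}$ and $d_1=0$; likewise $d_2=0$. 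The orthomodular identity then gives $x\sqcup y=x\sqcup(x'\sqcap(x\sqcup y))=x\sqcup d_1=x$ and symmetrically $x\sqcup y=y$, whence $x=y$. This verifies the regularity term condition, and the regularity and distributivity computations are routine once these terms are written down.
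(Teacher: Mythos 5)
Your overall strategy coincides with the paper's (a majority term for $\mathcal A$, a Maltsev term and a term scheme for regularity for $\mathcal W$), and your majority term $((x\sqcap y)\sqcup(x\sqcap z))\sqcup(y\sqcap z)$ is a correct alternative to the paper's $(x\sqcup y)\sqcap(y\sqcup z)\sqcap(z\sqcup x)$, since it reduces by idempotency, commutativity and absorption (ii) alone. The genuine gap is in your preliminary step: the ``orthomodular identity'' $x\sqcup(x'\sqcap(x\sqcup y))\approx x\sqcup y$ does \emph{not} follow from (i$'$) by the argument you give. Putting $z=0$ in (i$'$) yields only the one-sided inequality $x\sqcup y\le x\sqcup(x'\sqcap(x\sqcup y))$; for the reverse inequality you argue from $x\le x\sqcup y$ and $x'\sqcap(x\sqcup y)\le x\sqcup y$, but in a directoid $\sqcup$ is not monotone and $a\sqcup b$ is merely \emph{some} common upper bound of $a$ and $b$, not the least one, so from $a\le c$ and $b\le c$ one cannot conclude $a\sqcup b\le c$. (The identity is in fact derivable in $\mathcal W$, but only by a genuinely different argument using Lemma~\ref{lem1}(iv) and the generalized orthomodular law of the underlying poset; your one-line derivation does not establish it.) Since your verification of the Maltsev term routes through the ``meet form'' $x\sqcap(x'\sqcup(x\sqcap z))=x\sqcap z$, and your regularity argument through $x\sqcup y=x\sqcup(x'\sqcap(x\sqcup y))=x\sqcup d_1$, these steps are, as written, unjustified non sequiturs.

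The repair is available and is precisely how the paper engineers its proof: only the one-sided inequality is ever needed. Dualizing it (substitute $x\mapsto x'$, $z\mapsto z'$, apply $'$ and (iv)) gives $x\sqcap(x'\sqcup(x\sqcap z))\le x\sqcap z$. Then in your $p(x,x,z)$ the middle block is $z\sqcap(x'\sqcup x)=z$, the first block is $\le x\sqcap z\le z$, and the whole join collapses to $z$; in $p(x,y,y)$ the first block is $x\sqcap 1=x$, the middle block is $\le y\sqcap x$, i.e.\ below the third block, and the join collapses to $x\sqcup(x\sqcap y)=x$ by (ii). Likewise, once $d_1=0$ the inequality gives $x\sqcup y\le x\sqcup 0=x\le x\sqcup y$, so $x\sqcup y=x$ without any appeal to the equality. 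The paper's Maltsev term $(x\sqcup(y'\sqcap(y\sqcup z)))\sqcap(z\sqcup(y'\sqcap(y\sqcup x)))$ and its two regularity terms are built exactly so that the inequality suffices; you should rewrite your verifications in the same way, replacing every appeal to the orthomodular identity by the corresponding inequality.
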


\begin{proof}
First consider class $\mathcal A$. Define a ternary term $m$ via
\[
m(x,y,z):=(x\sqcup y)\sqcap(y\sqcup z)\sqcap(z\sqcup x).
\]
Then
\begin{align*}
m(x,x,z) & \approx(x\sqcup x)\sqcap(x\sqcup z)\sqcap(z\sqcup x)\approx x\sqcap(x\sqcup z)\approx x, \\
m(x,y,x) & \approx(x\sqcup y)\sqcap(y\sqcup x)\sqcap(x\sqcup x)\approx(x\sqcup y)\sqcap x\approx x, \\
m(x,z,z) & \approx(x\sqcup z)\sqcap(z\sqcup z)\sqcap(z\sqcup x)\approx(x\sqcup z)\sqcap z\approx z
\end{align*}
proving that $m$ is a majority term.

Now consider variety $\mathcal W$. We use the identity
\[
x\sqcup y\leq x\sqcup(x'\sqcap(x\sqcup y))
\]
which follows from (i') by putting $z=0$.

Define a ternary term $p$ via
\[
p(x,y,z):=(x\sqcup(y'\sqcap(y\sqcup z)))\sqcap(z\sqcup(y'\sqcap(y\sqcup x))).
\]
Then
\begin{align*}
p(x,x,z) & \approx(x\sqcup(x'\sqcap(x\sqcup z)))\sqcap(z\sqcup(x'\sqcap(x\sqcup x)))\approx(x\sqcup(x'\sqcap(x\sqcup z)))\sqcap z\approx z, \\
p(x,z,z) & \approx(x\sqcup(z'\sqcap(z\sqcup z)))\sqcap(z\sqcup(z'\sqcap(z\sqcup x)))\approx x\sqcap(z\sqcup(z'\sqcap(z\sqcup x)))\approx x.
\end{align*}
Thus $p$ is a Maltsev term.

Now put
\[
t(x,y):=(x'\sqcap(x\sqcup y))\sqcup(y'\sqcap(x\sqcup y)).
\]
Then
\[
t(x,x)\approx(x'\sqcap(x\sqcup x))\sqcup(x'\sqcap(x\sqcup x))\approx0,
\]
and if $t(x,y)=0$ then $x'\sqcap(x\sqcup y)=y'\sqcap(x\sqcup y)=0$ and hence
\begin{align*}
x & \leq x\sqcup y\leq x\sqcup(x'\sqcap(x\sqcup y))=x\sqcup0=x, \\
y & \leq x\sqcup y\leq y\sqcup(y'\sqcap(x\sqcup y))=y\sqcup0=y
\end{align*}
whence $x=x\sqcup y=y$. If, finally,
\begin{align*}
t_1(x,y,z) & :=t(x,y)\sqcup z, \\
t_2(x,y,z) & :=(t(x,y))'\sqcap z
\end{align*}
then
\begin{align*}
t_1(x,x,z) & \approx t(x,x)\sqcup z\approx z, \\
t_2(x,x,z) & \approx(t(x,x))'\sqcap z\approx z,
\end{align*}
and if $t_1(x,y,z)=t_2(x,y,z)=z$ then $t(x,y)\leq z\leq(t(x,y))'$ and hence $t(x,y)=t(x,y)\sqcap(t(x,y))'=0$ whence $x=y$.
\end{proof}

It is a question whether the class $\mathcal A$ is congruence permutable, too. We can establish a ``partial'' Maltsev term as follows. If
\[
p(x,y,z):=(x\vee L(y',y\sqcup z))\sqcap(z\vee L(y',x\sqcup y))
\]
then
\begin{align*}
p(x,x,z) & =(x\vee L(x',x\sqcup z))\sqcap(z\vee L(x',x\sqcup x))=(x\sqcup z)\sqcap(z\vee0)=z, \\
p(x,z,z) & =(x\vee L(z',z\sqcup z))\sqcap(z\vee L(z',x\sqcup z))=(x\vee0)\sqcap(x\sqcup z)=x.
\end{align*}
The problem, however, is that within the ``term'' $p$ there occurs the operator $L$ and, moreover, the suprema occurring in $p$ need not exist for all possible entries $x,y,z$, thus our ``term'' $p$ is only partial.

\section{Dedekind-MacNeille completion}

It is well-known that the Dedekind-MacNeille completion of an orthomodular poset need not be an orthomodular lattice. The aim of this section is to show that a bounded poset with a unary operation is a strong generalized orthomodular poset if and only if its Dedekind-MacNeille completion is nearly an orthomodular lattice.

The construction is as follows:

Let $\mathbf P=(P,\leq,{}',0,1)$ be a bounded poset with a unary operation. Define
\begin{align*}
 \DM(\mathbf P) & :=\{L(A)\mid A\subseteq P\}, \\
        A\vee B & :=LU(A,B)\text{ for all }A,B\in\DM(\mathbf P), \\
      A\wedge B & :=A\cap B\text{ for all }A,B\in\DM(\mathbf P), \\
            A^* & :=L(A')\text{ for all }A\in\DM(\mathbf P), \\
\BDM(\mathbf P) & :=(\DM(\mathbf P),\vee,\wedge,{}^*,\{0\},P).
\end{align*}
Then $\BDM(\mathbf P)$ is a complete lattice with a unary operation, called the {\em Dedekind-MacNeille completion} of $\mathbf P$, and $x\mapsto L(x)$ is an isomorphism from $(P,\leq,{}')$ to $(\{L(x)\mid x\in P\},\subseteq,{}^*)$.

An {\em orthomodular lattice} is a generalized orthomodular poset which is a lattice, i.e.\ which satisfies the {\em orthomodular law}
\[
x\leq y\text{ implies }y=x\vee(x'\wedge y)
\]
or, equivalently,
\[
x\leq y\text{ implies }x=y\wedge(x\vee y').
\]
$\BDM(\mathbf P)$ is said to be {\em nearly an orthomodular lattice} if
\[
L(a)\subseteq B\text{ implies }B=L(a)\vee(B\wedge(L(a))^*)
\]
for all $a\in P$ and $B\in\DM(\mathbf P)$. Note that if $\BDM(\mathbf P)$ is an orthomodular lattice then it is also nearly an orthomodular lattice, but our assumption is weaker since we do not quantify over all $A,B\in\DM(\mathbf P)$ with $A\subseteq B$, but only over all $a\in P$ and all subsets $B$ of $\DM(\mathbf P)$ with $L(a)\subseteq B$.

\begin{theorem}
Let $\mathbf P=(P,\leq,{}',0,1)$ be a bounded poset with a unary operation. Then $\mathbf P$ is a strong generalized orthomodular poset if and only if $\BDM(\mathbf P)$ is nearly an orthomodular lattice.
\end{theorem}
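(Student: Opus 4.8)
The plan is to set everything up through the dictionary between $\BDM(\mathbf P)$ and $\mathbf P$ and then to recognise the ``nearly orthomodular'' identity as nothing but condition (5), read off inside the completion. First I recall that the elements of $\DM(\mathbf P)$ are exactly the $LU$-closed subsets, i.e.\ the lower cones $L(A)$, that $A\wedge B=A\cap B$ and $A\vee B=LU(A,B)$, that bottom and top are $\{0\}$ and $P$, and that $a\mapsto L(a)$ embeds $\mathbf P$ as the principal ideals. Abbreviate the defining implication of near orthomodularity by $(\star)$: for $a\in P$ and $B\in\DM(\mathbf P)$, $L(a)\subseteq B$ implies $B=L(a)\vee(B\wedge(L(a))^*)$. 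The computation I will lean on throughout is that, as soon as $'$ is an antitone involution, the De Morgan law $(L(a))'=U(a')$ gives $(L(a))^*=L(U(a'))=L(a')$ (the last equality because $a'$ is the least element of $U(a')$), so that $\ast$ restricts along the embedding to the orthocomplementation $'$; consequently $B\wedge(L(a))^*=L(A)\cap L(a')=L(A,a')$ for $B=L(A)$.

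For the forward direction I assume $\mathbf P$ to be a strong generalized orthomodular poset, so $\mathbf P$ is an orthoposet and $(\DM(\mathbf P),\vee,\wedge,\ast)$ is an ortholattice. Fix $a\in P$ and $B=L(A)\in\DM(\mathbf P)$ with $L(a)\subseteq B$. The inclusion $L(a)\vee(B\wedge(L(a))^*)\subseteq B$ is immediate, since both joinands lie in $B$ and $B$ is $LU$-closed; the whole content is the reverse inclusion. Here I would use that in an ortholattice the orthomodular identity $B=L(a)\vee(B\wedge(L(a))^*)$ is equivalent, by orthocomplementing both sides, to its dual $B^*=L(a')\wedge(B^*\vee L(a))$ for the pair $B^*\subseteq L(a')$. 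Writing $B^*=L(C)$ and unwinding $\vee$ and $\wedge$, this dual identity reads $L(C)=L(a',U(L(C),a))$, which is exactly condition (5) with $y=a'$ (so that $y'=a''=a$). Since a strong generalized orthomodular poset satisfies (5), the identity holds for every such $C$; as $C$ runs over the $\DM(\mathbf P)$-elements contained in $L(a')$, the corresponding $B=(L(C))^*$ runs over all $\DM(\mathbf P)$-elements containing $L(a)$, whence $(\star)$ holds and $\BDM(\mathbf P)$ is nearly an orthomodular lattice.

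The converse is where the real difficulty sits. Assuming $\BDM(\mathbf P)$ nearly an orthomodular lattice, the plan is to first recover that $\mathbf P$ is an orthoposet and then to run the ortholattice duality of the previous paragraph backwards: with the orthoposet structure available, $(\star)$ is again equivalent to condition (5), which then makes $\mathbf P$ a strong generalized orthomodular poset. Two facts come for free. First, $\ast$ is automatically antitone on $\DM(\mathbf P)$, since $A\subseteq B$ gives $A'\subseteq B'$ and hence $L(B')\subseteq L(A')$. Second, specialising $(\star)$ to $B=P$ yields $L(a)\vee(L(a))^*=P$ for all $a$, so that $(L(a))^*$ is a complement-candidate of $L(a)$ from above; taking $a=0$ this reads $\{0\}\vee L(0')=L(0')=P$, i.e.\ $0'=1$.

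The main obstacle, and the step I expect to be genuinely delicate, is upgrading these partial facts to the full orthoposet laws. Everything reduces to proving $(L(a))^*=L(a')$, equivalently that $'$ is antitone: once this is known, $''$ is an involution, $\ast$ becomes the orthocomplementation along the embedding, and $U(a,a')=\{1\}$, $L(a,a')=\{0\}$ drop out of the $B=P$ instance together with De Morgan. The difficulty is that the naive specialisations only control the cone $(L(a))'=\{x'\mid x\le a\}$ rather than $a'$ itself, so one must exploit $(\star)$ for varying $B\in\DM(\mathbf P)$ to force $a'$ to be the least element of $(L(a))'$, i.e.\ to collapse that cone to $U(a')$. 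Having secured the orthoposet structure, the backward translation of the second paragraph turns $(\star)$ into condition (5), completing the proof that $\mathbf P$ is a strong generalized orthomodular poset.
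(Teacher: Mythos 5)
Your forward direction is correct and is essentially the paper's own argument: given that $\mathbf P$ is an orthoposet (so that $(L(a))^*=L(a')$ and $\BDM(\mathbf P)$ is an ortholattice), the near-orthomodular identity dualizes under $^*$ to $L(C)=L(a',U(L(C),a))$, which is condition (5). The paper runs exactly this translation, via the computation $L(b)\wedge(A\vee(L(b))^*)=L(b,U(A,b'))$ followed by the same orthocomplementation step, so there is nothing to object to there.

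The converse, however, is genuinely incomplete, and the route you propose cannot be repaired. You correctly isolate the decisive step --- proving $(L(a))^*=L(a')$, equivalently that $'$ is an antitone involution --- but you leave it as a hope (``one must exploit $(\star)$ for varying $B$'') rather than an argument. In fact no such argument exists, because your hypothesis, whether read literally as the implication $(\star)$ or strengthened to ``$\BDM(\mathbf P)$ is an ortholattice satisfying $(\star)$'', is a statement about the operation $^*$ on $\DM(\mathbf P)$ alone, and $^*$ does not determine $'$ pointwise. Concretely, take $P=\{0,a,b,1\}$ with $a,b$ incomparable and define $0'=1$, $a'=b$, $b'=a$, $1'=a$. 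Then $(L(a))^*=L(\{1,b\})=L(b)$, $(L(b))^*=L(a)$, $\{0\}^*=P$ and $P^*=L(\{a,b,1\})=\{0\}$, so $\BDM(\mathbf P)$ is the four-element Boolean algebra with its standard orthocomplementation --- an orthomodular lattice, hence a fortiori nearly one --- while $'$ is not an involution ($1''=b$), so $\mathbf P$ is not even an orthoposet, let alone a strong generalized orthomodular poset. Thus the collapse of the cone $(L(a))'$ to $U(a')$ that you hope to force from $(\star)$ simply does not follow, under either reading of the hypothesis.

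For comparison: the paper does not attempt your derivation at all. It reads the ortholattice structure into the notion ``nearly an orthomodular lattice,'' opens the proof with the assertion that $\mathbf P$ is an orthoposet if and only if $\BDM(\mathbf P)$ is an ortholattice, and then only has to identify (5) with the near-orthomodular law --- i.e.\ your second paragraph. Note that even that opening assertion tacitly uses the convention, stated in the construction of $\BDM(\mathbf P)$, that $x\mapsto L(x)$ is an isomorphism with respect to the unary operations (equivalently that $'$ is antitone, whence $(L(a))^*=L(a')$); the example above shows it fails for an arbitrary unary operation. So your instinct that this is the delicate point is sound, but the resolution is to build it into the hypotheses, as the paper implicitly does, not to derive it from $(\star)$, which is impossible.
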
 

\begin{proof}
It is easy to see that $\mathbf P$ is an orthoposet if and only if $\BDM(\mathbf P)$ is an ortholattice. Now let $a,b\in P$, $A,B\in\DM(\mathbf P)$ and $C\subseteq P$. Since
\[
L(b)\wedge(A\vee(L(b))^*)=L(b)\cap LU(A,L(b'))=L(b,U(A,b'))
\]
the following are equivalent:
\begin{align*}
                   & \mathbf P\text{ is a strong generalized orthomodular poset}, \\
        L(C)\leq b & \text{ implies }L(C)=L(b,U(L(C),b')), \\
   A\subseteq L(b) & \text{ implies }A=L(b,U(A,b')), \\
   A\subseteq L(b) & \text{ implies }A=L(b)\wedge(A\vee(L(b))^*), \\
L(b')\subseteq A^* & \text{ implies }A^*=L(b')\vee(A^*\wedge L(b)), \\
   L(a)\subseteq B & \text{ implies }B=L(a)\vee(B\wedge(L(a))^*), \\
                   & \BDM(\mathbf P)\text{ is nearly an orthomodular lattice}.
\end{align*}
\end{proof}

\begin{corollary}
Let $\mathbf P=(P,\leq,{}',0,1)$ be a bounded poset with a unary operation and assume that $\BDM(\mathbf P)$ is an orthomodular lattice. Then $\mathbf P$ is a strong generalized orthomodular poset.
\end{corollary}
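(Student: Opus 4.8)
The plan is to reduce the statement entirely to the Theorem immediately preceding it, which asserts the equivalence
\[
\mathbf P\text{ is a strong generalized orthomodular poset}\iff\BDM(\mathbf P)\text{ is nearly an orthomodular lattice}.
\]
Since $\BDM(\mathbf P)$ is assumed to be an orthomodular lattice, it therefore suffices to verify the single implication that every orthomodular lattice---here $\BDM(\mathbf P)$---is nearly an orthomodular lattice; the desired conclusion then follows at once from the right-to-left direction of that Theorem.

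First I would place the two conditions side by side. Being an orthomodular lattice means that the orthomodular law $B=A\vee(B\wedge A^*)$ holds for \emph{all} $A,B\in\DM(\mathbf P)$ with $A\subseteq B$, whereas being nearly an orthomodular lattice requires only $B=L(a)\vee(B\wedge(L(a))^*)$ for all $a\in P$ and all $B\in\DM(\mathbf P)$ with $L(a)\subseteq B$. The essential observation is that $L(a)=L(\{a\})\in\DM(\mathbf P)$ for every $a\in P$, by the very definition $\DM(\mathbf P)=\{L(A)\mid A\subseteq P\}$. Hence the family of pairs $(L(a),B)$ occurring in the ``nearly'' condition is simply a subfamily of the pairs $(A,B)$ governed by the full orthomodular law (and here $B\wedge(L(a))^*=(L(a))^*\wedge B$ by commutativity of meet, so the two formulas match after the substitution $A:=L(a)$).

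I would then specialize the orthomodular law to $A:=L(a)$: for each $a\in P$ and each $B\in\DM(\mathbf P)$ with $L(a)\subseteq B$ we obtain $B=L(a)\vee(B\wedge(L(a))^*)$, which is precisely the defining condition of a nearly orthomodular lattice. This shows $\BDM(\mathbf P)$ is nearly an orthomodular lattice, and applying the Theorem yields that $\mathbf P$ is a strong generalized orthomodular poset.

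There is essentially no obstacle here: the entire content is the observation, already flagged in the paragraph defining ``nearly an orthomodular lattice'', that near-orthomodularity is a weakening of orthomodularity obtained by quantifying only over principal lower cones $L(a)$ instead of over all elements of $\DM(\mathbf P)$. The one point requiring (trivial) verification is that each $L(a)$ genuinely lies in $\DM(\mathbf P)$, which is immediate from the definition of $\DM(\mathbf P)$.
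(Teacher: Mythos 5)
Your proposal is correct and follows exactly the route the paper intends: the corollary is an immediate consequence of the preceding Theorem together with the remark (made when ``nearly an orthomodular lattice'' is defined) that the orthomodular law, specialized to $A:=L(a)\in\DM(\mathbf P)$, yields the near-orthomodularity condition. Your spelled-out verification that each $L(a)$ lies in $\DM(\mathbf P)$ and that the two formulas match under this substitution is precisely the content the paper leaves implicit.
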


Authors' addresses:

Ivan Chajda \\
Palack\'y University Olomouc \\
Faculty of Science \\
Department of Algebra and Geometry \\
17.\ listopadu 12 \\
771 46 Olomouc \\
Czech Republic \\
ivan.chajda@upol.cz

Helmut L\"anger \\
TU Wien \\
Faculty of Mathematics and Geoinformation \\
Institute of Discrete Mathematics and Geometry \\
Wiedner Hauptstra\ss e 8-10 \\
1040 Vienna \\
Austria, and \\
Palack\'y University Olomouc \\
Faculty of Science \\
Department of Algebra and Geometry \\
17.\ listopadu 12 \\
771 46 Olomouc \\
Czech Republic \\
helmut.laenger@tuwien.ac.at
\end{document}